\theoremstyle{plain}
\newtheorem{theorem}{Theorem}[section]
\newtheorem{lemma}[theorem]{Lemma}
\newtheorem{corollary}[theorem]{Corollary}
\newtheorem{question}[theorem]{Question}
\theoremstyle{definition}
\newtheorem*{convention}{Convention}
\theoremstyle{remark}
\newcommand{\calG}{\mathcal{G}}
\title{Maximizing $H$-colorings of connected graphs with fixed minimum degree}
\author{John Engbers\thanks{john.engbers@marquette.edu; Department of Mathematics, Statistics and Computer Science, Marquette University, Milwaukee, WI 53201.}}
\date{\today }
\begin{document}

\maketitle

\begin{abstract}
For graphs $G$ and $H$, an $H$-coloring of $G$ is a map from the vertices of $G$ to the vertices of $H$ that preserves edge adjacency.  We consider the following extremal enumerative question: for a given $H$, which connected $n$-vertex graph with minimum degree $\delta$ maximizes the number of $H$-colorings?  We show that for non-regular $H$ and sufficiently large $n$, the complete bipartite graph $K_{\delta,n-\delta}$ is the unique maximizer.  As a corollary, for non-regular $H$ and sufficiently large $n$ the graph $K_{k,n-k}$ is the unique $k$-connected graph that maximizes the number of $H$-colorings among all $k$-connected graphs.  Finally, we show that this conclusion does not hold for all regular $H$ by exhibiting a connected $n$-vertex graph with minimum degree $\delta$ which has more $K_{q}$-colorings (for sufficiently large $q$ and $n$) than $K_{\delta,n-\delta}$.
\end{abstract}

%MSC 05C15, 05C35, 05C40

%Keywords: graph homomorphism, H-coloring, proper coloring, minimum degree, k-connected

%=============================================================================
\section{Introduction and statement of results}
%=============================================================================

Given a simple loopless graph $G = (V(G),E(G))$ and a graph $H = (V(H),E(H))$ without multi-edges but possibly with loops, an \emph{$H$-coloring} of $G$ is an adjacency preserving map $f:V(G) \to V(H)$ (i.e., $f$ satisfies $f(v) \sim_H f(w)$ whenever $v \sim_G w$). An $H$-coloring of $G$ is also referred to as a \emph{graph homomorphism} from $G$ to $H$. When $H=H_{\text{ind}}$, the graph consisting of a single edge and a loop on one endvertex, an $H_{\text{ind}}$-coloring of a graph $G$ may be identified with an independent set in $G$, and when $H=K_{q}$, the complete unlooped graph on $q$ vertices, a $K_{q}$-coloring of $G$ may be identified with a proper $q$-coloring of $G$.  We let $\hom(G,H)$ denote the number of $H$-colorings of $G$.

Much recent work has been done on the following extremal enumerative question: given a family of graphs $\calG$ and a graph $H$, which $G \in \calG$ maximizes the quantity $\hom(G,H)$? For a survey of results and conjectures surrounding questions of this type, see \cite{Cutler}.  

A family relevant to the current paper is the family of bipartite $n$-vertex $d$-regular graphs. Here, Galvin and Tetali \cite{GalvinTetali} (extending work of Kahn \cite{Kahn} for independent sets) showed that the graph in this family with the maximum number of $H$-colorings is $\frac{n}{2d} K_{d,d}$, which consists of $\frac{n}{2d}$ disjoint copies of the complete bipartite graph $K_{d,d}$. (Note that throughout we assume $n$ is large and $d$ is fixed, and we will assume any necessary divisibility conditions between $d$ and $n$.) When moving to the larger class of \emph{all} $n$-vertex $d$-regular graphs, for some $H$ the graph $\frac{n}{2d}K_{d,d}$ still maximizes the number of $H$-colorings (see \cite{Zhao,Zhao2}) but for other graphs $H$ the graph $\frac{n}{d+1}K_{d+1}$, which consists of $\frac{n}{d+1}$ disjoint copies of the complete graph $K_{d+1}$, maximizes the number of $H$-colorings (for example, take $H$ to be the disjoint union of two looped vertices).  Recently Sernau \cite{Sernau} showed that for certain $H$ there are disjoint copies of other small graphs (in particular, not copies of $K_{d,d}$ or $K_{d+1}$) that maximize the number of $H$-colorings, and also gave new families of $H$ for which copies of $K_{d+1}$ are the maximizer and other $H$ for which copies of $K_{d,d}$ are the maximizer.  

What drives some of these results is that the maximizing graph $G$ is obtained by taking the disjoint union of some number of copies of the same fixed small graph.  In particular, the idea in \cite{Sernau} to find maximizing graphs other than copies of $K_{d+1}$ or $K_{d,d}$ is to find an $H$ and a graph with the smallest number of vertices (more than $d+1$ but less than $2d$) that has a non-zero number of $H$-colorings, and then to let $H'$ be a large number of copies of $H$.  In this way, the $H'$-coloring count is driven by having the largest number of components (each with a non-zero number of $H$-colorings of each component) in an $n$-vertex $d$-regular graph. It is worth noting that $H'$ can be modified to be connected with the maximizing graph remaining the same \cite{Sernau}. 
While there is a single graph having the most number of $H$-colorings within the family of \emph{bipartite} $n$-vertex $d$-regular graphs for each possible $H$, at this point it is unclear what the smallest list of $n$-vertex $d$-regular graphs is so that for each possible $H$, some graph in the list maximizes the number of $H$-colorings of graphs in the family.  

Another natural and related family that has been studied is the family  of $n$-vertex graphs with minimum degree $\delta$. Here we have complete extremal results for $H_{\text{ind}}$-colorings \cite{CutlerRadcliffe}. The extremal question for all other $H$ was investigated for $\delta = 1$ and $\delta=2$ in \cite{Engbers}.  For larger values of $\delta$, a general answer is still unknown, but as shown in \cite{Engbers} there is a family of $H$ for which $K_{\delta,n-\delta}$ maximizes the number of $H$-colorings for all $\delta$ when $n$ is sufficiently large.  However, it is not difficult to construct $H$ for which some other graph maximizes the number of $H$-colorings %in this new family 
for all $\delta$. Indeed, as with regular graphs, there are $H'$ for which having a large number of components with a non-zero number of $H$-colorings significantly increases the $H'$-coloring count, and so the connected graph $K_{\delta,n-\delta}$ does not maximize the number of $H'$-colorings. 

Notice that an $H$-coloring of $G$ requires, by definition, a component of $G$ to be mapped to a component of $H$, and so it is natural to restrict our attention to $n$-vertex graphs with minimum degree $\delta$ which are connected.  Doing this produces a much more coherent structure of the maximizing graphs for almost all $H$, even large $H$. To describe these results, we now state the degree convention that we will use.

\begin{convention}
The degree of a vertex $v$ is $d(v) = |\{w: v \sim w\}|$.  In particular, a loop on a vertex adds one to the degree.  When considering $H$-colorings of $G$, we let $\Delta$ denote the maximum degree of a vertex in $H$.
\end{convention}

It was shown in \cite{Sidorenko} (with a short proof given in \cite{CsikvariLin}) that in the family of trees the star $K_{1,n-1}$ has the largest number of $H$-colorings for any $H$.  In \cite{EngbersGalvin}, this tree result is proved for sufficiently large $n$. The proof technique from \cite{EngbersGalvin} generalizes to a proof for the family of $2$-connected graphs on $n$ vertices, where for sufficiently large $n$ the graph $K_{2,n-2}$ maximizes the number of $H$-colorings for any connected non-regular $H$.   In \cite{EngbersGalvin}, the authors ask what happens for $k$-connected graphs where $k>2$.  We answer this question for connected non-regular $H$ and sufficiently large $n$.

\begin{theorem}\label{thm-main}
Fix $\delta \geq 2$.  Let $H$ be connected and non-regular, and let $G$ be an $n$-vertex connected graph with minimum degree at least $\delta$.  Then there exists a constant $c(\delta,H)$ so that for $n \geq c(\delta,H)$ we have 
\[\hom(G,H) \leq \hom(K_{\delta,n-\delta},H),
\]
with equality if and only if $G = K_{\delta,n-\delta}$. 
\end{theorem}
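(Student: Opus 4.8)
The plan is to parametrize $H$-colorings of $G$ through a small vertex cover and to read off the extremal structure from the resulting sum. First I would record the exact value of the benchmark. Colouring the $\delta$-side of $K_{\delta,n-\delta}$ by an ordered tuple $(v_1,\dots,v_\delta)$ and then colouring each of the $n-\delta$ remaining vertices independently into the common neighbourhood $\bigcap_i N_H(v_i)$ gives
\[
\hom(K_{\delta,n-\delta},H)=\sum_{(v_1,\dots,v_\delta)\in V(H)^{\delta}}\Bigl|\bigcap_{i=1}^{\delta}N_H(v_i)\Bigr|^{\,n-\delta}.
\]
Since $|\bigcap_i N_H(v_i)|\le\Delta$, with equality exactly when all the $v_i$ have a common neighbourhood of size $\Delta$, this equals $c_1\Delta^{\,n-\delta}+O\!\left((\Delta-1)^{\,n-\delta}\right)$, where $c_1>0$ counts the $\delta$-tuples attaining a common neighbourhood of the maximum size $\Delta$. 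So the benchmark grows like $c_1\Delta^{\,n-\delta}$, and I would keep track of the lower-order terms for the uniqueness claim.

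Next, for an arbitrary $G$ I would fix a minimum vertex cover $S$, so that $T=V(G)\setminus S$ is independent. Because $G$ has minimum degree at least $\delta$, every vertex of $T$ sends all of its $\ge\delta$ edges into $S$, and in particular needs $\delta$ distinct neighbours in $S$, forcing $\tau:=|S|\ge\delta$. Conditioning on the colouring of $S$ then gives the exact identity
\[
\hom(G,H)=\sum_{\substack{g:S\to V(H)\\ g\ \mathrm{a\ homomorphism\ of}\ G[S]}}\ \prod_{v\in T}\Bigl|\bigcap_{w\in N(v)}N_H(g(w))\Bigr|.
\]
When $\tau=\delta$ this already settles everything: each $v\in T$ must be adjacent to all of $S$ and have degree exactly $\delta$, so $G$ is $K_{\delta,n-\delta}$ together with (possibly) some edges inside $S$. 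Dropping the requirement that $g$ be a homomorphism of $G[S]$ only enlarges the sum, giving $\hom(G,H)\le\hom(K_{\delta,n-\delta},H)$ with equality if and only if $G[S]$ is edgeless, i.e. $G=K_{\delta,n-\delta}$.

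The real work is the case $\tau\ge\delta+1$, where I must show a strict loss for large $n$. Here I would bound the product above by exploiting the non-regularity of $H$ in two ways. Writing $\Delta'<\Delta$ for the second-largest degree in $H$, any $v\in T$ one of whose neighbours is coloured by a vertex of degree $<\Delta$ contributes a factor at most $\Delta'$; hence colourings $g$ that place low-degree colours on $S$ are exponentially suppressed by factors $(\Delta'/\Delta)$, one for each affected vertex of $T$. Moreover, even when all of $N(v)$ is coloured by maximum-degree vertices, the factor equals $\Delta$ only if those colours share a single common neighbourhood of size $\Delta$ — a stringent \emph{consistency} requirement. A first, crude estimate only yields order $|V(H)|^{\tau}\Delta^{\,n-\tau}$, which for $\tau>\delta$ need not beat $c_1\Delta^{\,n-\delta}$, so the consistency constraint must be used to improve it. I would partition $T$ by neighbourhood and argue that to come within a constant factor of $c_1\Delta^{\,n-\delta}$ all but $O(1)$ vertices of $T$ must simultaneously see one fixed $\delta$-subset of $S$ coloured consistently; together with $\tau\ge\delta+1$ and connectedness — needed precisely to forbid splitting $T$ into several independently optimised clusters, which is exactly how disconnected graphs beat $K_{\delta,n-\delta}$ — this is impossible once $n$ is large.

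The main obstacle is this last step: controlling the interaction between the low-degree-colour suppression and the neighbourhood-consistency constraint tightly enough to recover the \emph{exact} leading constant $c_1$ and to force the shared $\delta$-subset, rather than merely matching the exponential order $\Delta^{\,n-\delta}$. Pinning down the constant is what drives the uniqueness conclusion, and it is where I expect most of the effort — and the essential use of both connectedness and the non-regularity of $H$ — to concentrate.
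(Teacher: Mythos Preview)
Your $\tau=\delta$ case is fine and matches the paper's final case. The gap is in $\tau\ge\delta+1$: a minimum vertex cover need not be small. Take $G=C_n$ with $\delta=2$; then $\tau=\lceil n/2\rceil$, every vertex of $T$ has its own pair of neighbours in $S$, so your ``partition $T$ by neighbourhood'' is into singletons, and your structural claim that ``all but $O(1)$ vertices of $T$ see one fixed $\delta$-subset of $S$'' simply fails. Your conditioning identity in this example becomes $\hom(C_n,H)=\Tr(A_H^n)$, and beating $s(2,H)\Delta^{n-2}$ requires the Perron--Frobenius fact that the spectral radius of $A_H$ is strictly below $\Delta$ for connected non-regular $H$. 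Neither your ``low-degree suppression'' (there may be no low-degree colour on $S$ in many terms) nor your ``consistency'' sketch yields this; you are tacitly assuming that non-structurally-close $G$ have small counts without proving it.

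The paper avoids the large-$\tau$ trap by never fixing a global vertex cover. It first disposes of graphs containing a long path via a path-count lemma (equivalent to the spectral fact above), then iteratively colours maximal paths or cycles in the uncoloured part, gaining a factor $(1-\Delta^{-2})$ at each step from a fixed-endpoints path lemma. If many steps occur, the accumulated loss beats $\Delta^{-\delta}$; if few steps occur, the coloured set $C$ has \emph{bounded} size and its complement is independent, so pigeonhole on $\binom{|C|}{\delta}$ forces a $K_{\delta,bn}$ subgraph. Only then does an argument like your $\tau=\delta$ case finish. To repair your approach you would need either a separate spectral/path argument for graphs whose minimum vertex cover is unbounded, or to replace the vertex cover by a set whose size you can control --- which is effectively what the paper's iterative procedure manufactures.
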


When $\delta=1$, the conclusion is still true (for all $n$) as a corollary to the result for trees \cite{Sidorenko}. Also, notice that a $k$-connected graph must be connected and have minimum degree at least $k$.   Since $K_{k,n-k}$ is $k$-connected, the following is immediate. 

\begin{corollary}
Fix $k \geq 2$.  Let $H$ be connected and non-regular, and let $G$ be an $n$-vertex $k$-connected graph.  Then there exists a constant $c(k,H)$ so that for $n \geq c(k,H)$ we have 
\[
\hom(G,H) \leq \hom(K_{k,n-k},H),
\]
with equality if and only if $G=K_{k,n-k}$.
\end{corollary}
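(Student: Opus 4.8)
The plan is to compute the leading-order size of $\hom(K_{\delta,n-\delta},H)$ and then show that every other admissible graph falls strictly below it; the stated Corollary is then immediate, since a $k$-connected graph is connected with minimum degree at least $k$ and $K_{k,n-k}$ is itself $k$-connected. Write $\Delta=\Delta(H)$ and split $K_{\delta,n-\delta}$ into its small side $A$ (with $|A|=\delta$) and large side $B$ (with $|B|=n-\delta$). As $B$ is independent and each of its vertices is joined to all of $A$, a homomorphism is exactly a map $\phi\colon A\to V(H)$ together with an independent choice, for each $b\in B$, of a common neighbour of the image $\phi(A)$; hence
\[
\hom(K_{\delta,n-\delta},H)=\sum_{\phi\colon A\to V(H)}\Bigl|\,\bigcap_{a\in A}N_H(\phi(a))\,\Bigr|^{\,n-\delta}.
\]
A common neighbourhood has size at most $\Delta$, and sending all of $A$ to a single vertex of maximum degree already attains $\Delta$, so with $m=\#\{\phi\colon A\to V(H):|\bigcap_{a\in A}N_H(\phi(a))|=\Delta\}\ge 1$ I obtain $\hom(K_{\delta,n-\delta},H)=m\,\Delta^{n-\delta}+O((\Delta-1)^{n-\delta})$. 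The target to beat is therefore $m\,\Delta^{n-\delta}\bigl(1+o(1)\bigr)$.

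For the upper bound the basic local estimate is that, since $\delta\ge 2$, every vertex has a neighbour, so once the images of $N_G(v)$ are fixed there are at most $\Delta$ choices for $f(v)$, with equality only when $\{f(w):w\in N_G(v)\}$ has a common neighbourhood of the maximum size $\Delta$. Fixing a minimum vertex cover $S$ of $G$, so that $I=V(G)\setminus S$ is independent, gives
\[
\hom(G,H)=\sum_{f|_S}\ \prod_{v\in I}\Bigl|\,\bigcap_{w\in N_G(v)}N_H(f(w))\,\Bigr|\ \le\ |V(H)|^{|S|}\,\Delta^{\,n-|S|}.
\]
The minimum-degree hypothesis forces $|S|\ge\delta$, and the boundary case $|S|=\delta$ forces every vertex of $I$ to be adjacent to all of $S$; thus $G$ is $K_{\delta,n-\delta}$ with possibly some additional edges inside $S$. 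Here a direct comparison settles the equality statement: extra edges inside $S$ only shrink the set of admissible restrictions $f|_S$, so any such $G\ne K_{\delta,n-\delta}$ is strictly worse.

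The substantial case is $|S|\ge\delta+1$, where the factor $|V(H)|^{|S|}$ is far too wasteful. Here I would instead reveal a homomorphism greedily along a breadth-first ordering rooted at one vertex: the root has at most $|V(H)|$ images and every other vertex has an already-revealed neighbour, giving the constant-prefactor bound $\hom(G,H)\le |V(H)|\,\Delta^{\,n-1}$. The exponent is then reduced from $n-1$ down to $n-\delta-1$ by harvesting a strict multiplicative loss at vertices that are revealed with two or more neighbours whose images do \emph{not} share a common neighbourhood of the maximum size $\Delta$. This is exactly where non-regularity of $H$ is indispensable: for connected non-regular $H$ one has the strict spectral gap $\lambda_1(H)<\Delta$ (equivalently, two distinct vertices have a size-$\Delta$ common neighbourhood only in the rigid case that they share a full maximum neighbourhood), which is what makes each such doubly-constrained reveal cost a factor bounded away from $\Delta$, and which for instance already forces $\hom(C_n,H)=\Tr(A_H^{\,n})\sim\lambda_1(H)^{\,n}\ll\Delta^{\,n-\delta}$ in the extreme sparse case. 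Combining this with the counting identity that a breadth-first search of a connected graph produces exactly $|E(G)|-(n-1)$ doubly-revealed incidences, and with the minimum-degree bound $|E(G)|\ge \delta n/2$, should drive the total below $m\,\Delta^{n-\delta}$ once $n\ge c(\delta,H)$.

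The main obstacle is precisely this far-from-extremal upper bound: one must show that the homomorphisms which \emph{avoid} all of these losses---those collapsing every doubly-constrained neighbourhood onto a single compatible maximum-degree configuration---are too few, and too rigidly structured, to rescue the count, and this must hold uniformly over all connected minimum-degree-$\delta$ graphs $G$ other than the extremizer. I expect this to be delicate at the boundary value $\delta=2$, where the surplus $|E(G)|-(n-1)$ can be as small as a constant, so that the loss must be extracted from the global cycle structure rather than from a linear number of bad reveals; this is the step that genuinely uses connectivity together with non-regularity, and it is the natural generalization of the $2$-connected argument of \cite{EngbersGalvin}.
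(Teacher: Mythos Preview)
Your reduction of the Corollary to the main theorem is exactly what the paper does: a $k$-connected graph is connected with minimum degree at least $k$, and $K_{k,n-k}$ is $k$-connected, so the Corollary is immediate from Theorem~\ref{thm-main}. Your lower bound $\hom(K_{\delta,n-\delta},H)=m\,\Delta^{n-\delta}(1+o(1))$ with $m\ge 1$ also matches the paper's $s(\delta,H)\Delta^{n-\delta}$. The boundary case $|S|=\delta$ of your vertex-cover analysis is essentially the paper's final case ``$G$ contains $K_{\delta,n-\delta}$ as a subgraph''; the paper is a bit more careful about the \emph{strict} inequality (it shows that if adding an edge loses no homomorphisms then $H$ must be the complete looped graph, hence regular), but your version is fixable.

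The genuine gap is the case $|S|\ge \delta+1$, and you correctly flag it as the obstacle. The BFS-with-losses scheme you outline does not work as stated, for two reasons. First, the ``loss'' at a vertex $v$ revealed with two earlier neighbours $w_1,w_2$ is \emph{not} uniform: if $f(w_1)$ and $f(w_2)$ happen to be maximum-degree vertices of $H$ with a common neighbourhood of size $\Delta$ (which can occur in non-regular $H$), there is no loss at all, so you cannot simply multiply a fixed $(1-\varepsilon)$ over the $|E(G)|-(n-1)$ non-tree incidences. Second, even granting a per-vertex loss, it is the number of doubly-constrained \emph{vertices} that matters, not the number of non-tree incidences; at $\delta=2$ a graph can have $|E(G)|-(n-1)=O(1)$ such vertices, which is not enough to absorb the prefactor $|V(H)|\Delta^{\delta-1}/m$. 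Your observation $\hom(C_n,H)\sim\lambda_1(H)^n$ is the right global phenomenon, but you have not shown how to harvest it from a BFS.

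The paper sidesteps both issues with a different decomposition. It packages the spectral gap into two path lemmas: for long paths $\hom(P_k,H)<\Delta^{k-\delta}$ once $k\ge\ell(\delta,H)$, and for short paths with both endpoints pinned the count is at most $(\Delta^2-1)\Delta^{k-4}$. It then runs an iterative procedure: starting from one coloured vertex, at each step it finds a maximal path in a tree component of the uncoloured graph (both ends already pinned), or a cycle-plus-path if no tree component exists, and colours it. Each step colouring $k$ new vertices costs at most $(1-1/\Delta^2)\Delta^k$ \emph{uniformly over the colours already placed}, so the losses genuinely multiply. If the procedure runs many times, $\hom(G,H)<\Delta^{n-\delta}$; if it terminates after boundedly many steps, the coloured set $C$ is a vertex cover of bounded size, pigeonhole yields a $K_{\delta,bn}$ subgraph, and a separate stability argument finishes. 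This two-phase ``many losses or structurally close to $K_{\delta,n-\delta}$'' dichotomy is exactly the missing ingredient in your outline.
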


%Theorem \ref{thm-main} implies that if $G \in \calG(n,\delta)$ and each component of $G$ is large enough, then $\hom(G,H)$ is bounded above by $\hom(G',H)$, where $G'$ is a disjoint union of graphs of the form $K_{\delta,n_i - \delta}$ (where $n_i$ is the number of vertices in the $i$th component of $G$). 
The proof of Theorem \ref{thm-main} uses a stability technique similar to that in \cite{Engbers} and \cite{EngbersGalvin}.  In particular, we first show that if a connected $n$-vertex graph with minimum degree $\delta$ isn't structurally close to $K_{\delta,n-\delta}$ (specifically, if it doesn't contain a subgraph isomorphic to $K_{\delta,bn}$ for some small constant $b$), then it admits fewer $H$-colorings than $K_{\delta,n-\delta}$.  If it is structurally close to $K_{\delta,n-\delta}$ but isn't $K_{\delta,n-\delta}$, then a second argument will show that it again admits fewer $H$-colorings than $K_{\delta,n-\delta}$. 

It is shown in \cite{EngbersGalvin} that when $H=K_4$, for example, $\hom(C_n,K_4) > \hom(K_{2,n-2},K_4)$, showing that the conclusion of Theorem \ref{thm-main} doesn't hold in general when removing the non-regularity restriction on $H$, even for connected $H$.  We also show here that when $H=K_q$ for large enough $q$, there are connected $n$-vertex graphs with minimum degree $\delta$ with more $K_{q}$-colorings than $K_{\delta,n-\delta}$. To do so, consider first the graph $\frac{n}{\delta+1}K_{\delta+1}$ and in this graph choose one specified vertex in each copy of $K_{\delta+1}$, form a star on those specified vertices, and let $G_1$ be the resulting graph. %  fails for connected graphs on $n$-vertices with minimum degree $\delta$ for some large $q$.

\begin{theorem}\label{thm-cols}
Fix $\delta \geq 2$ %. % with $(\delta+1) |n$, 
and let $H=K_{q}$.  %Consider the graph $\frac{n}{\delta+1}K_{\delta+1}$.  Choose one specified vertex in each copy of $K_{\delta+1}$, form a star on those specified vertices, and let $G$ be the resulting graph. 
For sufficiently large $n$ (depending on $\delta$, with $(\delta+1)|n$) and sufficiently large $q$ (depending on $n$ and $\delta$) we have $$\hom(K_{\delta,n-\delta},H) < \hom(G_1,H),$$ where $G_1$ is the graph in the preceding paragraph.
\end{theorem}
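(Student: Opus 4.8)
The plan is to write down exact expressions for both homomorphism counts as polynomials in $q$, observe that both are monic of degree $n$, and then compare their second-highest coefficients, which will decide the inequality once $q$ is large.

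First I would compute $\hom(K_{\delta,n-\delta},K_q)$. A proper $q$-coloring of $K_{\delta,n-\delta}$ is exactly a pair of colorings of the two sides whose color sets are disjoint, so grouping by the number $t$ of colors used on the small side gives
\[
\hom(K_{\delta,n-\delta},K_q)=\sum_{t=1}^{\delta}S(\delta,t)\,(q)_t\,(q-t)^{n-\delta},
\]
where $(q)_t=q(q-1)\cdots(q-t+1)$ and $S(\delta,t)$ is a Stirling number of the second kind. For $\hom(G_1,K_q)$ I would condition on the color $\alpha$ of the center of the star. The copy of $K_{\delta+1}$ containing the center contributes $(q-1)_\delta$ colorings, while each of the remaining $m-1$ copies contributes $(q)_{\delta+1}-(q-1)_\delta$ colorings (the proper colorings of $K_{\delta+1}$ whose distinguished vertex avoids $\alpha$); summing over the $q$ choices of $\alpha$ and simplifying $q\,(q-1)_\delta=(q)_{\delta+1}$ yields the clean product formula
\[
\hom(G_1,K_q)=(q)_{\delta+1}^{\,m}\left(\frac{q-1}{q}\right)^{m-1},\qquad m=\frac{n}{\delta+1}.
\]

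Both expressions are monic polynomials in $q$ of degree $n$, so the sign of their difference for large $q$ is governed by the coefficient of $q^{n-1}$, which I would extract next. Expanding the product form, the coefficient of $q^{n-1}$ in $\hom(G_1,K_q)$ is $-\bigl[m\binom{\delta+1}{2}+(m-1)\bigr]=-\bigl[\tfrac{n\delta}{2}+\tfrac{n}{\delta+1}-1\bigr]$. For $\hom(K_{\delta,n-\delta},K_q)$ the computation is the step requiring the most care: the dominant $t=\delta$ summand contributes $-(n-\delta)\delta-\binom{\delta}{2}$ to the $q^{n-1}$ coefficient, but the $t=\delta-1$ summand also reaches degree $n-1$ and contributes $S(\delta,\delta-1)=\binom{\delta}{2}$, so these partially cancel and the coefficient of $q^{n-1}$ is exactly $-(n-\delta)\delta$.

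Subtracting, the difference $\hom(G_1,K_q)-\hom(K_{\delta,n-\delta},K_q)$ is a polynomial in $q$ of degree at most $n-1$ whose leading coefficient equals
\[
(n-\delta)\delta-\Bigl(\tfrac{n\delta}{2}+\tfrac{n}{\delta+1}-1\Bigr)=n\Bigl(\tfrac{\delta}{2}-\tfrac{1}{\delta+1}\Bigr)-(\delta^2-1).
\]
Since $\tfrac{\delta}{2}-\tfrac{1}{\delta+1}>0$ for every $\delta\ge 2$, this quantity is strictly positive once $n$ exceeds a threshold depending only on $\delta$; fixing such an $n$, the difference polynomial has degree exactly $n-1$ with positive leading coefficient, and is therefore positive for all sufficiently large $q$. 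This is precisely the claimed inequality, and it makes transparent the two ranges of largeness in the statement: $n$ must be large to force the subleading coefficient to favor $G_1$, and then $q$ must be large (depending on $n$ and $\delta$) for the leading term of the difference to dominate. The main obstacle is the bookkeeping in the previous paragraph, namely correctly recognizing that the $t=\delta-1$ term contributes at order $q^{n-1}$ and cancels the $\binom{\delta}{2}$ contribution, without which the comparison would come out wrong.
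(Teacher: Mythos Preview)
Your proof is correct and follows essentially the same route as the paper: compute both homomorphism counts as monic degree-$n$ polynomials in $q$, and compare the coefficients of $q^{n-1}$ to show the difference has positive leading coefficient for large $n$. The only minor distinction is that you use the exact Stirling-number expression for $\hom(K_{\delta,n-\delta},K_q)$ (correctly tracking the $t=\delta-1$ contribution), whereas the paper replaces the repeated-color contribution by the cruder upper bound $\delta^2 q^{n-1}$; this changes the constant term in the subleading coefficient but not the argument.
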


While the graph $G_1$ in Theorem \ref{thm-cols} has more $K_{q}$-colorings than $K_{\delta,n-\delta}$, it isn't clear which connected $n$-vertex graph with minimum degree $\delta$ maximizes the number of $K_{q}$-colorings in this family.

%%%%%%%%%%%%%%%%%%%%%%%%%%%%%%%%%%%%%%%%%%%%%%%
\section{Proofs of Theorems \ref{thm-main} and \ref{thm-cols}}

Recall that a loop on a vertex adds one to the degree and that $\Delta$ refers to the maximum degree of a vertex in $H$. We begin with a few useful lemmas.

\begin{lemma}\label{lem-longpath}
Fix $\delta$. For non-regular connected $H$ there exists a constant $\ell$ (depending on $\delta$ and $H$) such that if $k \geq \ell$, then $hom(P_k, H) < \Delta^{k-\delta}$.
\end{lemma}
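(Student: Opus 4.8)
The plan is to read $\hom(P_k,H)$ spectrally. A homomorphism from the path $P_k$ to $H$ is exactly a walk of length $k-1$ in $H$, so if $A$ denotes the (symmetric, nonnegative) adjacency matrix of $H$ --- with the diagonal entry at a looped vertex equal to $1$, consistent with the stated degree convention --- then $\hom(P_k,H) = \mathbf{1}^{T} A^{k-1}\mathbf{1}$, where $\mathbf{1}$ is the all-ones vector. Writing $A$ in an orthonormal eigenbasis $u_1,\dots,u_N$ (with $N=|V(H)|$) and eigenvalues $\lambda_1\ge\cdots\ge\lambda_N$, I would obtain $\hom(P_k,H)=\sum_i \lambda_i^{k-1}(u_i^{T}\mathbf{1})^2$. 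Since $H$ is connected, $A$ is irreducible, so by Perron--Frobenius its spectral radius is $\lambda_1>0$ and $|\lambda_i|\le\lambda_1$ for every $i$; hence $\lambda_i^{k-1}\le\lambda_1^{k-1}$ for all $i$, and using $\sum_i (u_i^{T}\mathbf{1})^2=\|\mathbf{1}\|^2=N$ this yields the clean bound $\hom(P_k,H)\le N\lambda_1^{k-1}$.

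The crux is the strict inequality $\lambda_1<\Delta$ for connected non-regular $H$. Let $x>0$ be the Perron eigenvector, so $Ax=\lambda_1 x$. Summing the coordinate identities $\lambda_1 x_v=\sum_w A_{vw}x_w$ over $v$ and using the symmetry of $A$ gives $\lambda_1\sum_v x_v=\sum_w d(w)x_w$, where $d(w)$ is the degree of $w$ (loops counted once, matching the convention). Thus $\lambda_1$ is the weighted average $\lambda_1=\big(\sum_w d(w)x_w\big)\big/\big(\sum_w x_w\big)$ of the degrees, with strictly positive weights $x_w$, so $\lambda_1\le\max_w d(w)=\Delta$, with equality if and only if every $d(w)$ equals $\Delta$, i.e.\ if and only if $H$ is regular. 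Since $H$ is non-regular, $\lambda_1<\Delta$ strictly.

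Combining the two steps, it suffices to choose $\ell$ so large that $N\lambda_1^{k-1}<\Delta^{k-\delta}$ for all $k\ge\ell$. Taking logarithms, the desired inequality reads $\log N < k\log(\Delta/\lambda_1)-\delta\log\Delta+\log\lambda_1$, whose right-hand side tends to $+\infty$ as $k\to\infty$ precisely because $\log(\Delta/\lambda_1)>0$. Hence such an $\ell$, depending only on $N$, $\Delta$, $\lambda_1$, and $\delta$ --- and so only on $H$ and $\delta$ --- exists, which completes the argument. The only genuinely nontrivial ingredient is the strict spectral gap $\lambda_1<\Delta$; the remaining estimates are routine once the walk/eigenvalue dictionary is in place. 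A minor point to handle carefully is the bipartite case, where $\lambda_N=-\lambda_1$: the term $\lambda_N^{k-1}(u_N^{T}\mathbf{1})^2$ is only ever nonpositive (it can lower the count) and is in any case dominated in absolute value by $\lambda_1^{k-1}$, so the upper bound $\hom(P_k,H)\le N\lambda_1^{k-1}$ is unaffected.
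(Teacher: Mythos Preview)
Your proof is correct and follows essentially the same spectral route as the paper: both rest on the fact that $\hom(P_k,H)$ grows like $\lambda_1^{k}$ where $\lambda_1$ is the Perron eigenvalue of the adjacency matrix of $H$, together with the standard fact that $\lambda_1<\Delta$ strictly for connected non-regular $H$. The paper merely quotes the bound $\hom(P_k,H)\le |V(H)|^2 c_1\lambda^k$ from \cite{EngbersGalvin}, whereas you supply the full argument (and with a cleaner constant $|V(H)|$ in place of $|V(H)|^2 c_1$); your handling of possible negative eigenvalues via $|\lambda_i|\le\lambda_1$ is also correct.
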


\begin{proof}
The proof of Lemma 3.1 in \cite{EngbersGalvin} shows that for non-regular connected $H$ we have $\hom(P_k,H) \leq |V(H)|^2 c_1 \lambda^k$ for some $\lambda < \Delta$ (where here $c_1$ and $\lambda$ are constants depending on $H$).  This implies the result.
\end{proof}

For shorter paths, we have the following, which is Lemma 4.5 from \cite{EngbersGalvin}.

\begin{lemma}[\cite{EngbersGalvin}]\label{lem-shortpath}
Suppose $H$ is not the complete looped graph on $\Delta$ vertices or $K_{\Delta,\Delta}$. Then for any two vertices $i$, $j$ of $H$ and for $k \geq 4$ there are at most $(\Delta^2-1)\Delta^{k-4}$ $H$-colorings of $P_k$ that map the initial vertex of that path to $i$ and the terminal vertex to $j$.
\end{lemma}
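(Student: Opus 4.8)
The plan is to reinterpret the count combinatorially. Writing $A$ for the adjacency matrix of $H$ (so that a loop contributes a diagonal $1$ and $A_{ab}\in\{0,1\}$), an $H$-coloring of $P_k$ sending the initial vertex to $i$ and the terminal vertex to $j$ is exactly a walk of length $k-1$ from $i$ to $j$ in $H$, so the quantity in question is $(A^{k-1})_{ij}$. I would prove $(A^{k-1})_{ij}\le (\Delta^2-1)\Delta^{k-4}$ for all pairs $i,j$ simultaneously by induction on $k\ge 4$, with the base case $k=4$ carrying all of the content and the inductive step being routine. For the step, peel off the terminal edge: letting $N(j)$ be the neighborhood of $j$,
\[
(A^{k})_{ij} = \sum_{a \in N(j)} (A^{k-1})_{ia} \le |N(j)| \cdot (\Delta^2-1)\Delta^{k-4} \le \Delta\cdot(\Delta^2-1)\Delta^{k-4} = (\Delta^2-1)\Delta^{(k+1)-4},
\]
using $|N(j)|\le\Delta$ and the inductive hypothesis on each term. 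Thus everything reduces to the base case.

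For $k=4$ I would peel off both end edges at once, writing
\[
(A^{3})_{ij} = \sum_{a \in N(i)} \sum_{b \in N(j)} A_{ab} \le |N(i)|\cdot|N(j)| \le \Delta^2,
\]
since each $A_{ab}\le 1$. The whole point is to show that the strict inequality $(A^3)_{ij}\le\Delta^2-1$ holds unless $H$ is one of the two excluded graphs, so I would analyze the equality case $(A^3)_{ij}=\Delta^2$. Equality forces $|N(i)|=|N(j)|=\Delta$ together with $A_{ab}=1$ for \emph{every} $a\in N(i)$ and $b\in N(j)$; that is, each vertex of $N(i)$ is adjacent to each vertex of $N(j)$. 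Because $|N(j)|=\Delta$ and every $a\in N(i)$ has degree at most $\Delta$, this forces $N(a)=N(j)$ for every $a\in N(i)$, and symmetrically $N(b)=N(i)$ for every $b\in N(j)$.

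From here I would split on how $N(i)$ and $N(j)$ meet. A common neighbor $c$ would give $N(i)=N(c)=N(j)$, so the two sets are either equal or disjoint. If $N(i)=N(j)=:S$, then every vertex of $S$ has neighborhood exactly $S$, so $S$ induces the complete looped graph on $\Delta$ vertices and no edges leave $S$; connectedness of $H$ then forces $H$ to be precisely that graph. If $N(i)$ and $N(j)$ are disjoint, then $N(i)\cup N(j)$ induces $K_{\Delta,\Delta}$ with all degrees saturated, so connectedness forces $H=K_{\Delta,\Delta}$. Both possibilities are ruled out by hypothesis, so $(A^3)_{ij}<\Delta^2$, i.e. $(A^3)_{ij}\le\Delta^2-1$, completing the base case and hence the lemma.

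The one genuinely delicate step is the equality analysis in the base case: the argument hinges on promoting the purely \emph{local} equality condition at the single pair $(i,j)$ into a \emph{global} structural conclusion. The crucial leverage is that equality saturates the degrees of all vertices in $N(i)\cup N(j)$, so connectedness of $H$ leaves no room for any further vertices or edges, pinning down $H$ as one of exactly two graphs. The inductive step, by contrast, is immediate.
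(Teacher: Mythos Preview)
The paper does not give its own proof of this lemma; it is quoted verbatim as Lemma~4.5 of \cite{EngbersGalvin}, so there is nothing in the present paper to compare against. Your argument is a clean, correct, self-contained proof: the reduction to the base case $k=4$ via peeling off a terminal edge is routine, and your equality analysis for $(A^3)_{ij}=\Delta^2$ correctly pins down $N(a)=N(j)$ for all $a\in N(i)$ and symmetrically, then cleanly splits on whether $N(i)$ and $N(j)$ meet.

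One small point worth flagging: your final step uses connectedness of $H$ to pass from ``$N(i)\cup N(j)$ induces the complete looped graph (resp.\ $K_{\Delta,\Delta}$) with no edges leaving'' to ``$H$ \emph{is} that graph,'' but the lemma as stated does not assume $H$ connected. In fact the stated conclusion is false without that assumption (take $H$ to be the complete looped graph on $\Delta$ vertices together with an isolated unlooped vertex). Since the paper only invokes the lemma for connected non-regular $H$, this is harmless in context, but you should make the connectedness hypothesis explicit.
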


Notice that if $H$ is a non-regular graph, then the conclusion of Lemma \ref{lem-shortpath} applies. Furthermore, we remark that colorings of $P_k$ where the endpoints receive the same color (i.e. $i=j$ in Lemma \ref{lem-shortpath}) are in bijection with colorings of $C_{k-1}$ with one fixed vertex receiving that color. In other words, Lemma \ref{lem-shortpath} may be applied on a cycle $C_{k-1}$ that has one vertex already colored. 

We are now ready to prove Theorem \ref{thm-main}. 

%%%%%%%%%%%%%%
\begin{proof}[Proof of Theorem \ref{thm-main}]
Let $\delta\geq 2$ be fixed and $H$ be a connected non-regular graph.  Following a definition from \cite{Engbers}, we let $S(\delta,H)$ denote the vectors in $V(H)^\delta$ with the property that the elements of the vector have $\Delta$ common neighbors in $H$, and let $s(\delta,H) = |S(\delta,H)|$. Notice that  $S(\delta,H)$ is always non-empty, since if $v$ is any vertex of degree $\Delta$ in $H$ then $(v,v,\ldots,v)$ is in $S(\delta,H)$. Then we have the lower bound 
\begin{equation}\label{eqn-lb}
\hom(K_{\delta,n-\delta},H) \geq s(\delta,H)\Delta^{n-\delta} \geq \Delta^{n-\delta},
\end{equation}
which colors the size $\delta$ partition class of $K_{\delta,n-\delta}$ using an element of $S(\delta,H)$, and independently colors each vertex of the size $n-\delta$ partition class using one of the $\Delta$ common neighbors.

Now let $G$ be a connected $n$-vertex graph with minimum degree $\delta$. The proof will proceed by iteratively coloring parts of the graph $G$; throughout we use connectivity to always color vertices adjacent to a previously colored vertex (after an initial subset of vertices is colored). This iterative coloring procedure produces an upper bound on $\hom(G,H)$ which in most cases is smaller than the lower bound on $\hom(K_{\delta,n-\delta},H)$ given in (\ref{eqn-lb}). 

Suppose that $G$ has a path of length $k \geq \ell$ (with $\ell$ as given in Lemma \ref{lem-longpath}). Coloring the path, and then iteratively coloring the remaining vertices --- which each have at most $\Delta$ possibilities for their color --- we have 
\[
\hom(G,H) < \Delta^{k-\delta}\Delta^{n-k} = \Delta^{n-\delta} \leq \hom(K_{\delta,n-\delta},H).
\]
 Therefore, from now on we assume that the graph $G$ does not have any paths of length $k \geq \ell$. 

Fix a vertex $v$ and color that vertex from $H$; there are $|V(H)|$ ways that this can be done.  Then run the following procedure.
\begin{enumerate}
	\item Denote by $C \subseteq V(G)$ the set of vertices that have already received a color.
	\item Search for a non-trivial component in the induced graph on vertex set $V(G) \setminus C$, i.e., a non-trivial component in the induced graph on the set of uncolored vertices.
	\begin{enumerate}
		\item If there is a non-trivial component that is a tree, find a maximal path $P$ in that component and iteratively color the vertices of the path $P$ (since $\delta(G) \geq 2$, the endpoints of $P$ have a neighbor in $C$).  Then proceed back to step 1.
		\item If all non-trivial components are not trees, choose one such component and fix a cycle in the component.  Let $X$ be the graph consisting of the cycle and the shortest path from a vertex on the cycle to a vertex in $C$.  Iteratively color the vertices of $X$ by coloring the path first and then the cycle.  Then proceed back to step 1.
		\item If all components are trivial, stop.
	\end{enumerate}
\end{enumerate}

Note that since we're assuming that there are no paths of length at least $\ell$ in $G$, at every iteration of this coloring scheme we color fewer than $\ell$ previously uncolored vertices. (In 2(b), removing one edge from the cycle that is adjacent to a vertex on the path makes $X$ into a path.) Also, if we run step 2(a) and color $k_1$ new vertices, Lemma \ref{lem-shortpath} shows that there are at most $(\Delta^2-1)\Delta^{k_1-2}$ ways this can be done. For step 2(b), each vertex in the path of $X$, including one vertex on the cycle $C_{k_2}$, has at most $\Delta$ possible colors.  And once one vertex on the cycle has been colored, the discussion following the statement of Lemma \ref{lem-shortpath} shows that there are at most $(\Delta^2-1)\Delta^{k_2-3}$ ways to color the remaining $k_2-1$ vertices of the cycle. In short, each iteration that colors $k$ previously uncolored vertices can be done in at most $(1-\frac{1}{\Delta^2})\Delta^k$ ways, regardless of the colors appearing on the vertices of $C$.

If there are $a$ iterations of this procedure run, then by coloring any remaining vertices using at most $\Delta$ possible choices for each uncolored vertex, we have
\[
\hom(G,H) \leq |V(H)|(1-\frac{1}{\Delta^2})^a\Delta^n < \Delta^{n-\delta} \leq \hom(K_{\delta,n-\delta},H)
\]
whenever $a > \Delta^2 \left( \log |V(H)| +  \delta \log \Delta \right)$; here $|V(H)|$ comes from the initial vertex, and $(1-\frac{1}{\Delta^2})\Delta^k$ comes from each iteration that colors $k$ new vertices.

So then suppose that $a \leq \Delta^2 \left( \log |V(H)| +  \delta \log \Delta \right)$.  In this case, the final output of colored vertices $C \subseteq V(G)$ upon termination has $|C|$ bounded above by a constant depending on $\delta$ and $H$, and therefore $n-|C|$ vertices remain uncolored. Since the procedure terminated, the vertices in $V(G) \setminus C$ form an independent set, and so in particular each vertex $v \in V(G) \setminus C$ has at least $\delta$ neighbors in the set $C$. By the pigeonhole principle, some set of $\delta$ vertices (among the set $C$) must have at least $(n-|C|)/\binom{|C|}{\delta}$ common neighbors.  In other words, the graphs $G$ for which we do not yet have $\hom(G,H) < \hom(K_{\delta,n-\delta},H)$ are those which have a (not necessarily induced) subgraph isomorphic to $K_{\delta,bn}$ for some small constant $b$ (depending on $\delta$ and $H$). Note that these graphs are ``structurally close'' to $K_{\delta,n-\delta}$.

So, finally, suppose that $G \neq K_{\delta,n-\delta}$ contains $K_{\delta, bn}$ for some constant $b$ (depending on $\delta$ and $H$). We will consider two cases: when $G$ does not have an isomorphic copy of $K_{\delta,n-\delta}$ as a subgraph, and finally when it does.

\medskip

\textbf{$G$ contains no subgraph isomorphic to $K_{\delta,n-\delta}$:} Suppose first that $G$ does not contain a subgraph isomorphic to $K_{\delta,n-\delta}$.  This means that the size $\delta$ partition class in $K_{\delta,bn}$ is not a dominating set.  In particular, the induced subgraph on the vertices outside of the size $\delta$ partition class must have a non-trivial component.  As before, if some component is a tree, let $X$ be a maximal path in the tree; if all components are not trees, let $X$ be the union of a cycle and a shortest path from a vertex on the cycle to a vertex in the size $\delta$ partition class.  
With $X$ now defined, we partition the $H$-colorings of $G$ based on whether the colors in the size $\delta$ partition class form a vector in $S(\delta,H)$ or not.  

If they do form a vector in $S(\delta,H)$, we then color $X$, and then the rest of the graph.  Using Lemma \ref{lem-shortpath} and discussion following it on $X$, this gives at most
\[
s(\delta,H) (\Delta^2-1) \Delta^{n-\delta-2}
\]
$H$-colorings of $G$ of this type.

If the colors on the size $\delta$ partition class do not form a vector in $S(\delta,H)$, then the at least $bn$ neighbors have at most $\Delta-1$ possible choices for their color. Using at most $\Delta$ choices for each of the remaining $n-\delta-bn$ vertices, there are at most
\[
|V(H)|^\delta \cdot (\Delta-1)^{bn} \Delta^{n-\delta-bn}
\]
$H$-colorings of $G$ of this type.

Putting these together, we have
\begin{eqnarray*}
\hom(G,H) &\leq& s(\delta,H) (\Delta^2-1) \Delta^{n-\delta-2} + |V(H)|^\delta \cdot (\Delta-1)^{bn} \Delta^{n-\delta-bn}\\
&\leq& s(\delta,H) \Delta^{n-\delta} - s(\delta,H)\Delta^{n-\delta-2} + |V(H)|^\delta \cdot e^{-bn/\Delta} \Delta^{n-\delta}\\
&<& s(\delta,H) \Delta^{n-\delta}
\end{eqnarray*}
where the final inequality holds for all sufficiently large $n$.

\medskip

\textbf{$G$ contains a subgraph isomorphic to $K_{\delta,n-\delta}$:}
Lastly, suppose that $G$ contains a subgraph isomorphic to $K_{\delta,n-\delta}$.  Since $G \neq K_{\delta,n-\delta}$, it suffices to show that adding any edge to $K_{\delta,n-\delta}$ strictly decreases the number of $H$-colorings. To this end, we suppose that $G$ is obtained from $K_{\delta,n-\delta}$ by the addition of a single edge, and we show that each $H$-coloring of $K_{\delta,n-\delta}$ is also an $H$-coloring of $G$ only when $H$ is the complete looped graph. %and we must simply exhibit an $H$-coloring of $K_{\delta,n-\delta}$ that is not an $H$-coloring of $G$.

Suppose that $i$ and $j$ are distinct adjacent vertices of $H$.  Then the mapping that sends the size $\delta$ partition class of $K_{\delta,n-\delta}$ to $i$ and the other partition class to $j$ is an $H$-coloring of $K_{\delta,n-\delta}$, and similarly the mapping that sends the size $\delta$ partition class to $j$ and the other partition class to $i$ is another $H$-coloring of $K_{\delta,n-\delta}$.  This is only an $H$-coloring of $G$ if $i$ and $j$ are both looped.  By similar reasoning, if $i$ and $j$ are adjacent in $H$ and $j$ and $k$ are also adjacent in $H$, then $i$ and $k$ must be adjacent.  As $H$ is connected, this implies that $H$ is a fully looped complete graph.  So if $G$ is obtained from $K_{\delta,n-\delta}$ by adding an edge and $H$ is non-regular, then $\hom(G,H) < \hom(K_{\delta,n-\delta},H)$.
\end{proof}

We conclude this section with the proof of Theorem \ref{thm-cols}.

\begin{proof}[Proof of Theorem \ref{thm-cols}]
Let $H=K_{q}$. First, we find an upper bound on $\hom(K_{\delta,n-\delta},K_q)$. There are
\[
q(q-1)\cdots(q-\delta+1)(q-\delta)^{n-\delta}
\]
colorings of $K_{\delta,n-\delta}$ that use distinct colors on the size $\delta$ partition class. In a similar way, there are at most
\[
\delta^2 \cdot q \cdot q^{\delta-2} \cdot (q-1)^{n-\delta} \leq \delta^2 q^{n-1}
\]
colorings of $K_{\delta,n-\delta}$ that have the same color on two or more vertices in the size $\delta$ partition class. This means that there are at most
\[
q(q-1)\cdots(q-\delta+1)(q-\delta)^{n-\delta} + \delta^2 q^{n-1}
\]
$K_{q}$-colorings of $K_{\delta,n-\delta}$.

For the graph $G_1$, we first color the $K_{\delta+1}$ containing the center of the star, and then the remaining copies of $K_{\delta+1}$.  This gives
\begin{eqnarray*}
\hom(G_1,K_{q}) &=& q(q-1)\cdots(q-\delta)\left[ (q-1)(q-1)(q-2)\cdots(q-\delta)\right]^{\frac{n}{\delta+1} - 1}.
%&\geq& \left[ (q-1)\cdots(q-\delta-1)\right]^{\frac{n}{\delta+1}}.
%&\geq& (q-1-\delta/2)^n.
\end{eqnarray*}
The coefficient of $q^{n-1}$ in the upper bound on $\hom(K_{\delta,n-\delta},H)$ is $-n\delta + \frac{3\delta^2+\delta}{2}$ and the coefficient of $q^{n-1}$ in $\hom(G_1,K_q)$ is $-\frac{n}{\delta+1}+1-\frac{n\delta}{2}$. %A short calculation of the coefficients of $q^{n-1}$ shows that 
So $\hom(G_1,K_q) - \hom(K_{\delta,n-\delta},K_q)$ is bounded below by a polynomial in $q$ of degree $n-1$ with leading coefficient $\frac{n\delta}{2}-\frac{n}{\delta+1}+\frac{-3\delta^2-\delta+2}{2}$.
This shows that for $\delta \geq 2$, sufficiently large $n$ (depending on $\delta$), and sufficiently large $q$ (depending on $n$ and $\delta$) we have $\hom(K_{\delta,n-\delta},K_{q}) < \hom(G_1,K_{q})$.
\end{proof}

%What else can I say?  Use previous result to talk about maximizers in non-connected cases? 

\section{Concluding Remarks}

Here we highlight a few open questions related to the contents of this paper.  We showed that for sufficiently large $n$ and connected non-regular $H$, the number of $H$-colorings of a connected $n$-vertex graph $G$ with minimum degree $\delta$ is maximized uniquely when $G=K_{\delta,n-\delta}$.  

First, it would be interesting to know if this behavior holds for all $n \geq 2\delta$, as it does for independent sets \cite{CutlerRadcliffe}.
\begin{question}
Fix a non-regular $H$ and $n\geq 2\delta$. Does $K_{\delta,n-\delta}$ maximize the number of $H$-colorings over all connected $n$-vertex graphs with minimum degree $\delta$?
\end{question}
The behavior for non-trivial regular $H$ is still unknown.
\begin{question}
Fix a regular $H$ and $n \geq 2\delta$. Which connected $n$-vertex graph with minimum degree $\delta$ maximizes the number of $H$-colorings?
\end{question}
In particular, we have the following interesting extremal question for proper $q$-colorings.
\begin{question}
For a given $\delta$, $n$, and $q$, which connected $n$-vertex graph with minimum degree $\delta$ maximizes the number of proper $q$-colorings?
\end{question}
It was shown \cite{EngbersGalvin} that in the case $\delta=2$ we have $\hom(K_{2,n-2},K_q)<\hom(C_n,K_q)$ for all fixed $q \geq 4$ and $n \geq 3$, and so in particular $K_{\delta,n-\delta}$ is not always the maximizing graph for fixed $q$, even for sufficiently large $n$.  Note also that when $q=2$, any bipartite graph maximizes the number of $K_{q}$-colorings. 

Investigating what happens when $n<2\delta$ would also be interesting; notice that in this range all $n$-vertex graphs with minimum degree at least $\delta$ are connected. The the following question has been answered in the special case of independent sets \cite{CutlerRadcliffe}.
\begin{question}
Fix any $H$, and let $n<2\delta$.  Which $n$-vertex graph with minimum degree $\delta$ maximizes the number of $H$-colorings?
\end{question}
We can ask these questions for the family of all $n$-vertex graphs with minimum degree $\delta$; answers for some $H$ and sufficiently large $n$ are given in \cite{Engbers}.
\begin{question}
Fix any $H$. Which $n$-vertex graph with minimum degree $\delta$ maximizes the number of $H$-colorings?
\end{question}
\begin{question}
For a given $\delta$, $n$, and $q$, which $n$-vertex graph with minimum degree $\delta$ maximizes the number of proper $q$-colorings?
\end{question}
Note that when $q=2$, we seek the $n$-vertex graph with minimum degree $\delta$ that has the largest number of bipartite components, and so the maximizing graph is $\frac{n}{2\delta}K_{\delta,\delta}$.

%Notice that $H$-colorings, when $H$ is either the complete looped graph or the complete bipartite graph $K_{\Delta,\Delta}$ are not very interesting.  The behavior for other regular $H$ is still unknown.
%\begin{question}
%Fix a regular $H$ and $n \geq 2\delta$. Which connected $n$-vertex graph with minimum degree $\delta$ maximizes the number of $H$-colorings?
%\end{question}
%Focusing on proper $q$-colorings, where $H=K_{q}$, we have the following.
%\begin{question}
%Fix $q$. Which $n$-vertex graph with minimum degree $\delta$ maximizes the number of proper $q$-colorings?
%\end{question}

%Future questions: what about regular graphs?  What about smaller values of $n$? In non-connected case what graphs can be maximizers?


\begin{thebibliography}{10}

\bibitem{CsikvariLin} P. Csikv\'ari and Z. Lin, Graph homomorphisms between trees, {\em Electron. J. Combin.} {\bf 21(4)} (2014), \#P4.9.

\bibitem{Cutler} J. Cutler, Coloring graphs with graphs: a survey, {\em Graph Theory Notes N.Y.} {\bf 63} (2012), 7-16.

\bibitem{CutlerRadcliffe} J. Cutler and J. Racliffe, The maximum number of complete graphs in a graph with given maximum degree, {\em J. Combin. Theory Ser. B} {\bf 104} (2014), 60-71.

\bibitem{Engbers} J. Engbers, Extremal $H$-colorings of graphs with fixed minimum degree, {\em J. Graph Theory} {\bf 79} (2015) 103--124.

\bibitem{EngbersGalvin} J. Engbers and D. Galvin, Extremal $H$-colorings of trees and 2-connected graphs, http://arxiv.org/abs/1506.05388.

%\bibitem{Galvin} D. Galvin, Two problems on independent sets in graphs, {\em Discrete Math.} {\bf 311} (2011), 2105-2112.

\bibitem{GalvinTetali} D. Galvin and P. Tetali, On weighted graph homomorphisms, {\em Graphs, Morphisms,
and Statistical Physics, DIMACS Ser. in Discrete Math. Theoret. Comput.
Sci.} {\bf 63} (2004), 97-104.

\bibitem{Kahn} J. Kahn, An entropy approach to the hard-core model on bipartite graphs,
{\em Combin. Probab. Comput.} {\bf 10} (2001), 219-237.

\bibitem{Sernau} L. Sernau, Graph operations and upper bounds on graph homomorphism counts, http://arxiv.org/abs/1510.01833.

\bibitem{Sidorenko} A. Sidorenko, A partially ordered set of functionals corresponding to graphs, {\em Discrete Math.} {\bf 131} (1994), 263-277.

\bibitem{Zhao} Y. Zhao, The number of independent sets in a regular graph, {\em Combin. Probab. Comput.} {\bf 19} (2010), 315-320.

\bibitem{Zhao2} Y. Zhao, The bipartite swapping trick on graph homomorphisms, {\em SIAM J. Discrete Math.} {\bf 25} (2011), 660-680.
\end{thebibliography}
\end{document}